\documentclass[12pt,a4paper]{amsart}
\usepackage{amsfonts,amscd,amssymb,amsthm}
\usepackage{amsfonts, comment}
\usepackage{amssymb}
\usepackage{amsmath}
\usepackage{amsxtra}
\usepackage{amscd}
\usepackage{amsthm}
\usepackage{eucal}
\usepackage{array}
\usepackage{graphicx}
\usepackage{tikz}
\usepackage{mathrsfs} % for font style
\usepackage{enumerate}
\usepackage{hyperref}
\usepackage{bm} %\mathcal bold yapmak icin

% Standard rings and fields, affine and projective space
%
               % the font for N,Z,Q,R,C

%----------------------------------
% Small letters in bold
%

%-------------------------------------

%%%%%%%%%%%%%%%%%%%%%%%%%%%%%%
    \oddsidemargin  0.1in
    \evensidemargin 0.1in
    \textwidth      6.0in
    \headheight     0.0in
    \topmargin      0.0in
    \textheight=8.0in
%%%%%%%%%%%%%%%%%%%%%%%%%%%%%%

\newtheorem{theorem}{Theorem}[section]
\newtheorem{lemma}[theorem]{Lemma}

\newtheorem{corollary}[theorem]{Corollary}

\theoremstyle{remark}
\newtheorem{remark}[theorem]{Remark}

\theoremstyle{definition}

\newtheorem{definition}[theorem]{Definition}

\DeclareMathOperator{\lcm}{lcm}
\DeclareMathOperator{\reg}{reg}
\DeclareMathOperator{\supp}{supp}

\begin{document}
%\color{darkgray}
%white, black, red, green, blue, cyan, magenta, yellow
%
\title{Density of linearity index in the interval of matching numbers}

\author[N. Erey]{Nursel Erey}
\author[T. Hibi]{Takayuki Hibi}
\address{Nursel Erey, Gebze Technical University, Department of Mathematics, 41400 Gebze, Kocaeli, Turkey}
\email{nurselerey@gtu.edu.tr}

\address{Takayuki Hibi, Department of Pure and Applied Mathematics, Graduate School of Information Science and Technology, Osaka University, Suita, Osaka 565--0871, Japan}
\email{hibi@math.sci.osaka-u.ac.jp}

\dedicatory{Dedicated to the memory of Jürgen Herzog}

\subjclass[2020]{05E40, 13D02, 05C70.}

\keywords{linearity index, matching, edge ideal, squarefree power, linear quotient}

\begin{abstract}
Given integers $2 \leq p \leq c \leq q$, we construct a finite simple graph $G$ with $\nu_1(G) = p$ and $\nu(G) = q$ for which the squarefree power $I(G)^{[k]}$ of the edge ideal $I(G)$ of $G$ has linear quotients for each $c \leq k \leq q$ and is not linearly related for each $1 \leq k < c$, where $\nu_1(G)$ is the induced matching number of $G$ and $\nu(G)$ is the matching number of $G$.
\end{abstract}

\maketitle

\section{Introduction}
Let $G$ be a finite simple graph on the vertex set $[n] = \{1, \ldots, n\}$ and let $E(G)$ be its edge set.  Recall that a finite graph $G$ is {\em simple} if $G$ has no loops and no multiple edges.  Let $S = K[x_1, \ldots, x_n]$ denote the polynomial ring in $n$ variables over a field $K$.  The {\em edge ideal} of $G$ is the monomial ideal $I(G) = (x_ix_j : \{i,j\} \in E(G)) \subset S$.  Fr\"oberg \cite{Frö} showed that $I(G)$ has linear resolution if and only if the complementary graph $\overline{G}$ of $G$ is chordal, where $\overline{G}$ is a finite simple graph on $[n]$ with $E(\overline{G}) = \{\{i,j\} : i < j, \{i,j\} \notin E(G)\}$.  (A chordal graph is a finite simple graph whose cycles of length $ > 3$ have a chord.)  Later, in 2004, by virtue of Dirac’s theorem on chordal graphs together with the modern theory of Gr\"obner bases, it was shown in \cite{HHZ} that all powers of $I(G)$ have linear resolution if and only if $\overline{G}$ is chordal.  Under the influence of \cite{HHZ} a huge number of papers studying powers of edge ideals have been published.  

The squarefree power of an edge ideal was first studied in \cite{BHZ}.  A set of edges $M = \{e_1, \ldots, e_k\}$ of $G$ is called a {\em matching} of $G$ if $e_i \cap e_j = \emptyset$ for $1 \leq i < j \leq k$.  If a matching $M$ consists of $k$ edges, then $M$ is called a \emph{$k$-matching}.  The {\em matching number} $\nu(G)$ of $G$ is the maximal cardinality of the matchings of $G$. For an edge $e=\{i,j\}$ let $x_e$ denote the monomial $x_ix_j$. Given a matching $M = \{e_1, \ldots, e_k\}$, the notation $x_M$ stands for the squarefree monomial $x_{e_1}\dots x_{e_k}$ of $S$.  The $k$th {\em squarefree power} of the edge ideal $I(G)$ of $G$ is defined to be the squarefree monomial ideal 
\[
I(G)^{[k]} = (\{ x_M : \text{$M$ is a $k$-matching of $G$} \})
\]   
of $S$, where $1 \leq k \leq \nu(G)$.  Thus in particular $I(G) = I(G)^{[1]}$. Note that $I(G)^{[k]}$ is generated by the squarefree monomials in $I(G)^k$. In \cite{BHZ} it is proved that $\nu(G)$th squarefree  power $I(G)^{[\nu(G)]}$ has linear quotients. Following \cite{BHZ}, squarefree powers of edge ideals were studied in detail in \cite{EHHS}. One can refer to  \cite{CF, CFL, DRS1, DRS2, EF1, EF2, EHHS2, F1, F2, F3, FHH, FM, HS, KNQ} for some recent research on this topic.

An {\em induced matching} of $G$ is a matching $M = \{e_1, \ldots, e_k\}$ for which the induced subgraph on $\cup_{i=1}^{k} e_k$ consists of exactly $k$ edges.  The {\em induced  matching number} $\nu_1(G)$ of $G$ is the maximal cardinality of the induced matchings of $G$. The \emph{minimum matching number} of $G$, denoted by $\nu_2(G)$, is the minimum cardinality of the maximal matchings of $G$. It is well-known from \cite{HVT, K, W} that $\nu_1(G)\leq \reg(S/I(G))\leq \nu_2(G)\leq \nu(G)$. Hibi et al. \cite{HHKT} studied non-decreasing sequences $(a_1, a_2, a_3, a_4)$ for which there exists a graph $G$ with $(a_1, a_2, a_3, a_4)=(\nu_1(G),\, \reg(S/I(G)),\, \nu_2(G),\, \nu(G))$. 

In this article, we consider a question of similar type for squarefree powers. Let $c(G)$ denote the smallest integer $c$ for which $I(G)^{[c]}$ has linear resolution.  We say that $c(G)$ is the {\em linearity index} of $G$.  It follows from \cite[Theorem ~2.1]{EHHS} and \cite[Theorem~4.1]{BHZ} that linearity index is bounded between the induced matching number and the matching number of $G$: \[\nu_1(G)\leq c(G) \leq \nu(G).\]
It is natural to ask if, given integers $1 \leq p \leq c \leq q$, there is a finite simple graph $G$ with 
\[\nu_1(G) = p,\ c(G) = c \ \text{ and } \ \nu(G) = q.\]  The main result of the present paper is Theorem~\ref{maintheorem}, which guarantees that, given integers $2 \leq p \leq c \leq q$, there exists a finite simple connected graph $G$ with $\nu_1(G) = p, \nu(G) = q$ and $c(G) = c$ for which $I(G)^{[k]}$ has linear quotients for each $c \leq k \leq q$ and $I(G)^{[k]}$ is not linearly related for each $1 \leq k < c$. We remark that the case when $\nu_1(G)=p = 1$ is not included in Theorem~\ref{maintheorem}. It is therefore unclear if the linearity index of a so-called \textit{gap-free} graph could be any integer in the interval $[\nu_1(G),\, \nu(G)]$.

It was proved by the authors of this paper in \cite{EH} that if $G$ is a forest, then $I(G)^{[k]}$ has linear resolution for all $k\geq c(G)$. The family of graphs (Definition \ref{def}) we construct in the main theorem of this paper also has this property. It is unknown though whether such a property holds for \textit{any} graph $G$.

%-----------------------------------------------
%    MAIN RESULTS
%-----------------------------------------------

\section{Main results}
 Let $I$ be a monomial ideal in $S$ generated in degree $d$. We denote by $G(I)$ the set of minimal monomial generators of $I$. We define the graph $G_I$
 whose vertex set is $G(I)$ and its edge set is
 \[E(G_I) = \{\{u,v\} : u,v \in G(I) \text{ with } \deg(\lcm(u,v)) = d+1\}.\]
 For all $u,v \in G(I)$ let $G^{(u,v)}_I$ 
 be the induced subgraph of $G_I$ with vertex set
 \[V(G^{(u,v)}_I) = \{w \in G(I): w \text{ divides } \lcm (u,v)\}.\]
Let $I \subset S$ be a graded ideal generated in a single degree. The ideal $I$ is said to
 be \emph{linearly related}, if the first syzygy module of $I$ is generated by linear relations. It was shown in \cite{BHZ} that $I$ being linearly related is determined by the graph $G_I$.
 \begin{theorem}\cite[Corollary~2.2]{BHZ}
Let $I$ be a monomial ideal generated in degree $d$. Then $I$ is linearly
 related if and only if for all $u,v \in G(I)$ there is a path in $G^{(u,v)}_I$
 connecting $u$ and $v$.  
 \end{theorem}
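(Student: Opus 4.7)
My plan is to work directly with Taylor syzygies. Write $G(I) = \{u_1, \dots, u_s\}$ and, for each pair, let $\tau_{u,v} := (\lcm(u,v)/u)\, e_u - (\lcm(u,v)/v)\, e_v$; this family generates the first syzygy module of $I$ (via the Taylor complex), and $\tau_{u,v}$ is a \emph{linear} syzygy (multidegree $d+1$) precisely when $\{u,v\} \in E(G_I)$. So $I$ is linearly related if and only if every $\tau_{u,v}$ lies in the $S$-submodule $L$ generated by the $\tau_e$ with $e \in E(G_I)$. The main device is a rescaled version: for any monomial $m$ and any $a, b \in G(I)$ with $a, b \mid m$, set
\[\sigma^{(m)}_{a,b} := \frac{m}{a}\, e_a - \frac{m}{b}\, e_b = \frac{m}{\lcm(a,b)}\,\tau_{a,b}.\]
A direct computation yields the cocycle identity $\sigma^{(m)}_{a,b} + \sigma^{(m)}_{b,c} = \sigma^{(m)}_{a,c}$ whenever $a, b, c \mid m$.

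For the $(\Leftarrow)$ direction, fix $u, v \in G(I)$ and choose a path $u = w_0, w_1, \dots, w_\ell = v$ in $G^{(u,v)}_I$. Telescoping with $m = \lcm(u,v)$ (each $w_i \mid m$ by hypothesis) gives
\[\tau_{u,v} = \sigma^{(m)}_{u,v} = \sum_{i=0}^{\ell-1}\sigma^{(m)}_{w_i, w_{i+1}} = \sum_{i=0}^{\ell-1} \frac{m}{\lcm(w_i, w_{i+1})}\, \tau_{w_i, w_{i+1}},\]
and since each $\{w_i, w_{i+1}\} \in E(G_I)$, every summand lies in $L$; hence $I$ is linearly related.

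For the $(\Rightarrow)$ direction, suppose $I$ is linearly related, so $\tau_{u,v} = \sum_{e \in E(G_I)} q_e\, \tau_e$ for some $q_e \in S$. Extracting the multi-degree $m := \lcm(u,v)$ strand forces each surviving edge $e = \{a,b\}$ to satisfy $\lcm(a,b) \mid m$---equivalently $e \in E(G^{(u,v)}_I)$---with $q_e$ a scalar multiple $c_{a,b}\cdot m/\lcm(a,b)$ for some $c_{a,b} \in K$. Rewriting in terms of $\sigma$, the equation becomes
\[\sigma^{(m)}_{u,v} = \sum_{\{a,b\} \in E(G^{(u,v)}_I)} c_{a,b}\, \sigma^{(m)}_{a,b}.\]
Comparing coefficients of $e_w$ on both sides and cancelling the common monomial $m/w$ reduces this to a linear system over $K$ saying that $(c_{a,b})$ is a unit flow from $u$ to $v$ on $G^{(u,v)}_I$. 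Summing the conservation equation over the connected component $C_u$ of $u$ gives $0$ on the right (edge contributions cancel in antisymmetric pairs) and $1$ on the left if $v \notin C_u$---a contradiction. The main bookkeeping hurdle is this multi-degree extraction together with fixing orientation conventions so that the flow interpretation is unambiguous; once that is done, the argument reduces to the standard characterization of graph connectivity by the existence of a unit flow.
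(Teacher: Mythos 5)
The paper does not prove this statement at all; it is quoted verbatim from \cite[Corollary~2.2]{BHZ}, so there is no internal proof to compare against. Your argument is, as far as I can check, correct and self-contained modulo one standard fact that you invoke and should attribute: that the Koszul-type relations $\tau_{u,v}$ generate the full first syzygy module of a monomial ideal (exactness of the Taylor complex in homological degree one). Granting that, the reduction of ``linearly related'' to ``every $\tau_{u,v}$ lies in $L$'' is sound, since the degree-$(d+1)$ strand of the syzygy module is spanned exactly by the $\tau_e$ with $e\in E(G_I)$; the cocycle identity and the telescoping give the backward direction cleanly; and in the forward direction the multidegree extraction correctly confines the surviving edges to $G^{(u,v)}_I$ and turns the coefficient comparison into the incidence-matrix equation $Bc=\chi_u-\chi_v$, whose solvability forces $u$ and $v$ into the same component. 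This is essentially the same mathematical content as the original proof in \cite{BHZ}, which works with the multigraded components of $\operatorname{Tor}_1$ and identifies the obstruction with the reduced $0$-th homology of the graph on the generators dividing a given multidegree; your version trades that homological packaging for an explicit unit-flow argument, which is more elementary but requires you to be careful about orientation conventions, as you note. The only presentational gap is that you should state explicitly that $Z_{d+1}\subseteq L$ (every linear syzygy is a $K$-combination of the $\tau_e$), which is what makes ``linearly related $\Rightarrow \tau_{u,v}\in L$'' legitimate; it follows from the same observation that $\sigma^{(m)}_{a,b}$ with $|m|=d+1$ and $a\neq b$ forces $\deg\lcm(a,b)=d+1$.
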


 We denote a \textit{complete} graph on $m$ vertices by $K_m$. We call a vertex $v$ of $G$ a \textit{whisker} if $v$ is adjacent to exactly one vertex of $G$. An \textit{isolated vertex} of $G$ is a vertex that is not adjacent to any vertex of $G$. To simplify the notation, we will identify the vertices of a graph with the variables of a polynomial ring throughout this paper.
 \begin{lemma}
 \label{related}
Let $1\leq d<c$ be integers. Let $H$ be a graph on $c$ vertices whose connected components are a complete graph $K_{c-d}$ and $d$ isolated vertices. Let $G$ be the graph which is obtained by adding a whisker to each vertex of $H$. Then $I(G)^{[c-1]}$ is not linearly related and thus does not have linear resolution.
 \end{lemma}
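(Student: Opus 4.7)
The plan is to exhibit a specific $(c-1)$-matching $M^*$ of $G$ whose monomial $x_{M^*}$ is an isolated vertex in the graph $G_I$, where $I = I(G)^{[c-1]}$. Once this is established, picking any other generator $v$ of $I$, the induced subgraph $G^{(x_{M^*},v)}_I$ still has $x_{M^*}$ as an isolated vertex, so no path joins $x_{M^*}$ to $v$, and the criterion \cite[Corollary~2.2]{BHZ} (quoted above) yields the claim.

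To set up notation, I would label the vertices of $K_{c-d} \subset H$ by $v_1, \ldots, v_{c-d}$ and the isolated vertices of $H$ by $w_1, \ldots, w_d$, with $v_i'$ and $w_j'$ denoting the whiskers attached in $G$. The candidate matching I propose is
\[
M^* = \{\{v_i, v_i'\} : 1 \le i \le c-d\} \cup \{\{w_j, w_j'\} : 2 \le j \le d\},
\]
which is a matching of $G$ of size $(c-d) + (d-1) = c-1$, so $x_{M^*} \in G(I)$. Another generator of $I$ is readily produced, for example from the $(c-1)$-matching $(M^* \setminus \{\{v_1, v_1'\}\}) \cup \{\{w_1, w_1'\}\}$, so $G(I)$ has at least two elements.

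The main step is to check that $x_{M^*}$ has no neighbors in $G_I$. A neighbor would correspond to a $(c-1)$-matching $M$ agreeing with $M^*$ in all but one edge, the two differing edges sharing exactly one vertex. So I would loop over the edges $e \in M^*$ and try every edge $e'$ of $G$ sharing exactly one vertex with $e$ as a candidate replacement. Since $v_i'$ has degree one in $G$, when $e = \{v_i, v_i'\}$ the only such $e'$ are the clique edges $\{v_i, v_k\}$ with $k \in [c-d] \setminus \{i\}$; but $v_k$ already lies in $\{v_k, v_k'\} \in M^*$, so the swap fails to be a matching. When $e = \{w_j, w_j'\}$, both endpoints of $e$ have degree one in $G$ (because $w_j$ is isolated in $H$), so no valid $e'$ exists. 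This exhausts every case.

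The real obstacle is coming up with the right $M^*$: one must choose a $(c-1)$-matching that simultaneously uses the whiskers of \emph{all} the clique vertices (so that any attempt to substitute a clique edge for a whisker immediately collides with another whisker edge already in the matching) and fills out the remaining $d-1$ slots from the $w$-whiskers in a way that leaves no room to swap. Once $M^*$ is in hand, every verification is routine case analysis based on the degree-one vertices of $G$.
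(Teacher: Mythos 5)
Your strategy is sound and is essentially the paper's own: the witness you pick, $x_{M^*}$, is exactly the generator $u=w/(x_1y_1)$ obtained by dropping the whisker edge at an isolated vertex of $H$, and the paper likewise derives non-linear-relatedness from the fact that this generator has no neighbours in $G_I$. The choice of $M^*$ is correct and the conclusion that $x_{M^*}$ is isolated is true.

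There is, however, a gap in the step where you enumerate potential neighbours. Adjacency of $x_M$ and $x_{M^*}$ in $G_I$ means $\deg(\lcm(x_M,x_{M^*}))=2(c-1)+1$, i.e.\ the \emph{vertex sets} covered by $M$ and $M^*$ differ by exactly one vertex in each direction; it does not mean that $M$ is obtained from $M^*$ by swapping a single edge, since $M$ may re-pair several vertices. This is not a hypothetical worry: in this very graph (when $c-d\geq 3$) the generators of the matchings $\{\{v_1,v_2\}\}\cup\{\text{whisker edges at all other vertices of }H\}$ and $\{\{v_2,v_3\}\}\cup\{\text{whisker edges at all other vertices of }H\}$ are adjacent in $G_I$ (their supports miss $\{v_1',v_2'\}$ and $\{v_2',v_3'\}$ respectively, so the lcm has degree $2c-1$) yet they differ in two edges. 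So your loop over single-edge swaps is not a priori exhaustive. The repair is short and argues on supports directly: the only vertices of $G$ outside $\supp(x_{M^*})$ are $w_1$ and $w_1'$, so any neighbour of $x_{M^*}$ would be a $(c-1)$-matching covering exactly one of $w_1,w_1'$; but each of these is a degree-one vertex whose unique neighbour is the other, so any matching covering one must cover both --- a contradiction. With that substitution (which is really the point your last paragraph is groping toward) the proof is complete.
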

 \begin{proof}
     Let $V(H)=\{x_1,\dots ,x_c\}$ where $x_1,\dots ,x_d$ are isolated vertices and the vertices $x_{d+1},\dots ,x_c$ form a clique. Let $G$ be obtained by attaching the whisker $y_i$ to $x_i$ for every $i=1,\dots ,c$. Let $w=x_1\dots x_cy_1\dots y_c$. Let $J=I(G)^{[c-1]}$. Consider $u=w/(x_1y_1), \, v=w/(x_{d+1}y_{d+1})\in G(J)$. Then $w=\lcm(u,v)$. We claim that there is no path in $G_J^{(u,v)}$ connecting $u$ to $v$. Now, observe that $G(J)=V(G_J^{(u,v)})$ so that $G_J=G_J^{(u,v)}$. Then we can write $V(G_J^{(u,v)})=V_1\cup V_2$ for 
     \[V_1=\Big\{\frac{w}{x_iy_i}: i\in \{1,\dots ,c\}\Big\}\]
     and 
     \[V_2=\Big\{\frac{w}{(x_iy_i)(x_jy_j)}(x_ix_j): i\neq j \text{ and } i,j\in \{d+1,\dots ,c\} \Big\}. \]
     By definition of the graph $G_J$, no two vertices in $V_1$ are adjacent. Moreover, no vertex in $V_1$ is adjacent to a vertex in $V_2$. Since both $u$ and $v$ are in $V_1$, it follows that they are isolated vertices of the graph $G_J$.
     \end{proof}

     \begin{corollary}
    Let $G$ be the same graph as in Lemma \ref{related}.  Then $I(G)^{[k]}$ is not linearly related for $k < c$.
     \end{corollary}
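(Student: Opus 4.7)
My approach is to mimic the argument of Lemma~\ref{related} with a suitably adapted pair of monomials, working for an arbitrary $k$ with $1\le k<c$. The goal is once again to exhibit two generators $u,v$ of $J:=I(G)^{[k]}$ such that no path in $G_J^{(u,v)}$ connects them; the characterization recalled from \cite[Corollary~2.2]{BHZ} then immediately yields the conclusion.

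First I would choose an index $m\in\{d+1,\ldots,c\}\setminus\{2,\ldots,k\}$. This set is non-empty: one may take $m=d+1$ when $k\le d$, or $m=k+1$ when $d<k<c$. Set
\[
u \;=\; x_1y_1\prod_{i=2}^{k}x_iy_i,\qquad v \;=\; x_my_m\prod_{i=2}^{k}x_iy_i,
\]
so that $u$ and $v$ correspond to two $k$-matchings built entirely from whisker edges. Then $\lcm(u,v)=x_1y_1x_my_m\prod_{i=2}^{k}x_iy_i$ is squarefree of degree $2k+2$, with support $V=\{x_i,y_i:i\in I\}$ where $I=\{1,2,\ldots,k,m\}$.

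Next I would enumerate the vertices of $G_J^{(u,v)}$, i.e., the $k$-matchings of the induced subgraph of $G$ on $V$. Since $V$ contains exactly $k+1$ vertices of type $x$ and $k+1$ of type $y$, and since each clique edge uses two $x$-vertices but no $y$-vertex while each whisker edge uses one of each, every such matching contains at most one clique edge. This splits the vertex set of $G_J^{(u,v)}$ into whisker vertices $w_l:=\lcm(u,v)/(x_ly_l)$ for $l\in I$ and, when $d<k$, clique vertices $M_{ij}:=\lcm(u,v)/(y_iy_j)$ for pairs $\{i,j\}\subseteq I\cap\{d+1,\ldots,c\}$. A short degree computation then identifies the edges of $G_J^{(u,v)}$: any two whisker vertices have $\lcm$ of degree $2k+2$ and so are never adjacent, while $w_l$ is adjacent to $M_{ij}$ if and only if $l\in\{i,j\}$.

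The conclusion is now immediate from the hypothesis $d\ge 1$, which excludes $1$ from the clique index set: $v=w_1$ is therefore adjacent to no other vertex of $G_J^{(u,v)}$, so $v$ is isolated and $u=w_m$ cannot be joined to it by a path. The main obstacle is handling the case $d<k$, where clique vertices $M_{ij}$ really do appear in $G_J^{(u,v)}$; the point is to check that each $M_{ij}$ only connects to whisker vertices indexed inside the clique, so that the non-clique whisker vertex $w_1$ remains shielded, and this shielding is precisely what the hypothesis $d\ge 1$ guarantees.
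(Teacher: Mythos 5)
Your proof is correct, but it takes a genuinely different route from the paper. The paper argues by a double induction on $d$ and $c$: it removes a vertex of $H$ together with its whisker to obtain an induced subgraph $G'$ of the same form with smaller parameters, invokes \cite[Corollary~1.3]{EHHS} (non-linear-relatedness of a squarefree power passes from an induced subgraph to the ambient graph) to handle all $k<c-1$, and then settles the remaining case $k=c-1$ by Lemma~\ref{related}. You instead treat every $k<c$ directly by the method of Lemma~\ref{related} itself: you pick the pair $u=w_m$, $v=w_1$ of $k$-matchings made of whisker edges, enumerate all generators dividing $\lcm(u,v)$ (your counting argument showing at most one clique edge can occur, hence only the vertices $w_l$ and $M_{ij}$, is correct and complete), and observe that $w_1$ is isolated in $G_J^{(u,v)}$ because $1\le d$ keeps the index $1$ out of the clique. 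The hypothesis $d\ge 1$ from Lemma~\ref{related} is used in exactly the right place, and the choice of $m$ outside $\{2,\dots,k\}$ is well defined in both the cases $k\le d$ and $d<k<c$. Your argument is self-contained, avoids the citation of \cite[Corollary~1.3]{EHHS}, and in fact subsumes Lemma~\ref{related} as the special case $k=c-1$; the paper's induction is shorter on the page only because it leans on that external hereditary result.
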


     \begin{proof}
    We proceed by induction on $d > 0$. First, we discuss the case of $d = 1$.  To prove the corollary for $d=1$, we use induction on $c > 1$.  If $c=2$, then $G$ is the disjoint union of two edges and thus $I(G)$ is not linearly related.  Let $c > 2$ and let $G'$ be the induced subgraph obtained by removing one of the vertices of $K_{c-1}$ and its whisker from $G$. Then the assumption of induction says that $I(G')^{[k]}$ is not linearly related for $k < c-1$.  Since $G'$ is an induced subgraph, it follows from \cite[Corollary 1.3]{EHHS} that $I(G)^{[k]}$ is not linearly related for $k < c-1$.  Then, the desired result follows from Lemma \ref{related}.
    
    We now assume that $d > 1$. Let $G'$ be the induced subgraph obtained by removing one of the isolated vertices of $H$ and its whisker from $G$. Then the assumption of induction says that $I(G')^{[k]}$ is not linearly related for $k < c-1$.  Since $G'$ is an induced subgraph, it follows again that $I(G)^{[k]}$ is not linearly related for $k < c-1$. Then the desired result follows from Lemma \ref{related}.
    \end{proof}

Let $u$ be a minimal generator of $I(G)^{[s]}$. A monomial $x_ix_j$ corresponding to an edge $\{i,j\}$ of $G$ is called an \textit{$s$-fold component} of $u$ if $u/(x_ix_j)$ is a minimal generator of $I(G)^{[s-1]}$. Given two monomials $u$ and $v$, we will denote by $u:v$ the monomial $\frac{u}{\gcd(u,v)}$. By $\supp(u)$, we will denote the set of variables dividing the monomial $u$.

Let $I \subset S$ be a monomial ideal generated in one degree and let $G(I)=\{u_1, \ldots, u_s\}$. We say that $I$ has {\em linear quotients} if there exists an ordering $u_{i_1}, \ldots, u_{i_s}$ of $u_1, \ldots, u_s$ for which the colon ideal $(u_{i_1}, \ldots, u_{i_{j-1}}):u_{i_j}$ is generated by variables for each $2 \leq j \leq s$.

Let $G$ and $H$ be two graphs with disjoint vertices. The \textit{join} of $G$ and $H$, denoted by $G*H$, is the new graph whose vertex set is $V(G)\cup V(H)$ and edge set is \[E(G*H)=E(G)\cup E(H)\cup\{\{x,y\}:x\in V(G), y\in V(H)\}.\]

The \textit{reverse lexicographic order} induced by $x_{i_1}>x_{i_2}>\dots >x_{i_n}$ is the total order on the monomials of $S=K[x_1,\dots ,x_n]$ such that $x_{i_1}^{\alpha_1}\dots x_{i_n}^{\alpha_n} > x_{i_1}^{\beta_1}\dots x_{i_n}^{\beta_n}$ whenever the rightmost non-zero component of $(\beta_1-\alpha_1, \beta_2-\alpha_2, \dots , \beta_n-\alpha_n)$ is positive.

%------------------------------------------------
%   LINEAR QUOTIENTS THEOREM
%-------------------------------------------------
\begin{theorem}
\label{nursel}
Let $1\leq r \leq q$ be integers. Let $H_1$ be any graph on $r$ vertices and let $H_2$ be a complete graph on $q-r$ vertices such that $V(H_1)$ and $V(H_2)$ are disjoint. Let $G$ be the graph which is obtained by adding a whisker to each vertex of $H_1*H_2$. Then $I(G)^{[k]}$ has linear quotients for all $r\leq k\leq q$. 
\end{theorem}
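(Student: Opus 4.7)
The plan is to exhibit an explicit ordering of the minimal generators of $I(G)^{[k]}$ witnessing linear quotients. Writing $V(H_1) = \{x_1,\ldots,x_r\}$ and $V(H_2) = \{x_{r+1},\ldots,x_q\}$ with corresponding whiskers $y_1,\ldots,y_q$, I would list the minimal generators of $I(G)^{[k]}$ from smallest to largest in the reverse lexicographic order induced by the variable ordering $y_1 > y_2 > \cdots > y_q > x_1 > x_2 > \cdots > x_q$.

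For the verification, given two minimal generators $x_M$ (later in the list, so $x_M$ rev-lex larger) and $x_{M'}$ (earlier), I must find a minimal generator $x_{M''}$ strictly earlier than $x_M$ such that $x_{M''}:x_M$ is a single variable dividing $x_{M'}:x_M$. After picking any variable $z \in \supp(x_{M'}) \setminus \supp(x_M)$, the requirement translates to producing a $k$-matching $M''$ whose vertex set is exactly $\supp(x_M) \cup \{z\} \setminus \{w\}$ for some $w \in \supp(x_M)$ that precedes $z$ in the variable ordering (so that $x_{M''} < x_M$ in reverse lex). Crucially, $M''$ is not required to share any specific edges with $M$; only its vertex set is prescribed.

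The existence of such an $M''$ rests on the structural features of $G$. The completeness of $H_2$ (every pair of $V(H_2)$-vertices is adjacent), the join adjacency $V(H_1) \leftrightarrow V(H_2)$ (every $V(H_1)$-vertex is adjacent to every $V(H_2)$-vertex), and the whiskers $y_i$ attached to each $x_i$ together give ample flexibility to re-pair the vertex set $\supp(x_M) \cup \{z\} \setminus \{w\}$ into $k$ disjoint edges of $G$, typically via a chain of local swaps routed through a $V(H_2)$-vertex of $M$. The hypothesis $k \geq r$ enters through the observation that the ``$H_1$-side'' $V(H_1) \cup \{y_1,\ldots,y_r\}$ supports matchings of at most $r$ edges, so for $k \geq r$ almost every $k$-matching has access to the highly connected $V(H_2)$-part of $G$, supplying the pivot needed for the re-pairing.

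The main obstacle is the case analysis, organized by the type of $z$ (whisker $y_i$, $V(H_1)$-vertex, or $V(H_2)$-vertex) and by the local structure of $M$ near $z$ and the chosen $w$. In each case one must select $w$ preceding $z$ in the variable ordering and construct an explicit matching of $G$ on the vertex set $\supp(x_M) \cup \{z\} \setminus \{w\}$. The most delicate cases will be those where $z$ is a low-indexed whisker $y_i$ with $i \leq r$ and $M$ is nearly saturated on the $V(H_1)$-side, so that the pivot used to reconnect through a $V(H_2)$-vertex must be located via a sequence of join edges.
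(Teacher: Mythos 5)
Your strategy is the same as the paper's: order the minimal generators by a reverse lexicographic order and verify the linear‑quotients exchange condition by producing, for each offending pair, a $k$-matching $M''$ whose vertex set differs from that of $M$ in a single vertex. Your reformulation --- that $M''$ need not share edges with $M$, only have the prescribed vertex set $\supp(x_M)\cup\{z\}\setminus\{w\}$ with $w$ preceding $z$ --- is correct, and you correctly identify where $k\geq r$ must enter. But the proposal stops exactly where the proof begins. The entire mathematical content of the theorem is the case analysis you defer to your last paragraph: choosing which variable to target, choosing $w$, and exhibiting the re-pairing of $\supp(x_M)\cup\{z\}\setminus\{w\}$ into $k$ disjoint edges in every configuration. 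The paper's proof consists almost entirely of this analysis (a four-way case split on whether the targeted variable lies in $V(H_1)$, in $V(H_2)$, or is a whisker of one or the other, each case ending with an explicit generator $u_t$), and nothing in your sketch substitutes for it. You also use a different variable order (all whiskers above all vertices of $H_1*H_2$, versus the paper's $a_1>\dots>a_r>b_1>\dots>b_r>c_1>\dots>c_{q-r}>d_1>\dots>d_{q-r}$); it may also work, but that is an additional unverified choice.

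There is moreover a concrete error in the framing that would derail the case analysis as you have set it up. You say that \emph{any} variable $z\in\supp(x_{M'})\setminus\supp(x_M)$ can be targeted. The criterion only requires \emph{some} variable dividing $x_{M'}:x_M$, and for a bad choice of $z$ the task is impossible. Take $r=1$, $q=3$, so $G$ is a triangle on $x_1,x_2,x_3$ with whiskers $y_1,y_2,y_3$, and $k=2$. Let $x_M=x_1x_2y_1y_2$ and $x_{M'}=x_1x_2x_3y_3$; in your order $x_{M'}$ is indeed earlier than $x_M$, and $x_{M'}:x_M=x_3y_3$. The choice $z=y_3$ cannot be realized: any matching covering $y_3$ must use the edge $\{x_3,y_3\}$, but $x_3\notin\supp(x_M)\cup\{y_3\}$, so no set of the form $\supp(x_M)\cup\{y_3\}\setminus\{w\}$ is the vertex set of a $2$-matching. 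One is forced to target $z=x_3$ instead (which works, e.g.\ $x_{M''}=x_1x_2x_3y_2$). So the proof must be organized around a priority rule for which variable of $\supp(x_{M'})\setminus\supp(x_M)$ to aim at --- this is precisely what structures the paper's Cases 1 through 2.2.2 --- and without that rule and the accompanying explicit constructions, the argument is not yet a proof.
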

\begin{proof}
    Let $V(H_1)=\{a_1,\dots ,a_r\}$ and $V(H_2)=\{c_1,\dots ,c_{q-r}\}$. Let $b_i$ be the whisker attached to $a_i$ for all $1\leq i\leq r$. Let $d_i$ be the whisker attached to $c_i$ for all $1\leq i\leq q-r$. We consider the reverse lexicographic order induced by the following order of the variables: \[a_1>\dots >a_r>b_1>\dots >b_r>c_1>\dots >c_{q-r}>d_1>\dots >d_{q-r}.\] 
    
    Let $r\leq k\leq q$ and let $u_1>\dots >u_p$ be the minimal generators of $I(G)^{[k]}$ in the reverse lexicographic order. We claim that $u_1,\dots ,u_p$ is a linear quotients order. Let $u_j>u_i$. We will show that there exists $u_t>u_i$ such that $u_t:u_i$ is a variable dividing $u_j:u_i$.

 \textbf{Case 1:} Suppose that there exists some $a_\ell$ which is in $\supp(u_j)\setminus \supp(u_i)$. Since $k\geq r$ the monomial $u_i$ has a $k$-fold component of the form $w=c_yd_y$ or $w=c_yc_z$. Otherwise, the support of $u_i$ contains $\{a_1,\dots,a_r\}$ against the fact that $a_\ell\in \supp(u_j)\setminus \supp(u_i)$. In either case, $\frac{u_i}{w}c_ya_\ell=u_t$ for some minimal generator $u_t$. Then $u_t>u_i$ and $u_t:u_i=a_\ell$.

 \textbf{Case 2:} Suppose that $\supp(u_j)\cap V(H_1)\subseteq \supp(u_i)\cap V(H_1) $. 
	
	\textbf{Case 2.1:} Suppose that there exists $c_\ell$ in $\supp(u_j)\setminus \supp(u_i)$. If there exists $d_y$ dividing $u_i$, then $\frac{u_i}{d_y}c_\ell=u_t$ for some minimal generator $u_t$ which satisfies $u_t>u_i$ and $u_t:u_i=c_\ell$. Suppose that there is no $d_y$ dividing $u_i$. Then since the generators are ordered in the reverse lexicographic order, no $d_y$ divides $u_j$ either. Moreover, since $u_j>u_i$, there exists $\ell'>\ell$ such that $c_{\ell'}$ divides $u_i$. Then $\frac{u_i}{c_{\ell'}}c_\ell=u_t$ for some minimal generator $u_t$ which satisfies $u_t>u_i$ and $u_t:u_i=c_\ell$.

 \textbf{Case 2.2:} Suppose that $\supp(u_j)\cap V(H_2)\subseteq \supp(u_i)\cap V(H_2)$. Then since $u_j\neq u_i$ the monomial $u_j:u_i$ is divisible by $d_\ell$ or $b_\ell$ for some $\ell$.
 
 \textbf{Case 2.2.1:}
First, suppose that $u_j:u_i$ is divisible by some $d_\ell$ so that $c_\ell d_\ell$ is a $k$-fold component of $u_j$. Since $u_j>u_i$ with respect to reverse lexicographic order, it follows that $c_{\gamma}d_{\gamma}$ is a $k$-fold component of $u_i$ for some $\gamma>\ell$. Since $c_\ell$ divides $u_i$, the monomial $\frac{u_i}{c_{\gamma}d_{\gamma}}$ has a $(k-1)$-fold component of the form $c_\ell a_{\ell'}$ or $c_\ell c_{\ell'}$. If $c_\ell a_{\ell'}$ is a $(k-1)$-fold component, then $w=\frac{u_i}{(c_\ell a_{\ell'})(c_{\gamma}d_{\gamma})}$ is a minimal generator of $I(G)^{[k-2]}$. Then $w(c_\ell d_\ell)(a_{\ell'} c_{\gamma})=u_t$ for some $u_t>u_i$ and $u_t:u_i=d_\ell$.
	
	On the other hand, if $c_\ell c_{\ell'}$ is a $(k-1)$-fold component,  then $w=\frac{u_i}{(c_\ell c_{\ell'})(c_{\gamma}d_{\gamma})}$ is a minimal generator of $I(G)^{[k-2]}$. Then $w(c_\ell d_\ell)(c_{\gamma}c_{\ell'})=u_t$ for some $u_t>u_i$ and $u_t:u_i=d_\ell$ as desired.
 
 \textbf{Case 2.2.2:} We may now assume that $\supp(u_j)\cap D\subseteq \supp(u_i)\cap D$ where $D=\{d_1,\dots ,d_{q-r}\}$. Suppose that $u_j:u_i$ is divisible by some $b_\ell$ so that $a_\ell b_\ell$ is a $k$-fold component of $u_j$. Then either $a_\ell c_{\ell'}$ or $a_\ell a_{\ell'}$  is a $k$-fold component of $u_i$ for some $\ell'$. First, suppose that $a_\ell c_{\ell'}$ is a $k$-fold component of $u_i$. Then $\frac{u_i}{c_{\ell'}}b_\ell=u_t$ for some minimal generator $u_t$ which satisfies $u_t>u_i$ and $u_t:u_i=b_\ell$. Next, suppose that $a_\ell a_{\ell'}$  is a $k$-fold component of $u_i$ for some $\ell'$. Since $|V(H_1)|=r\leq k$ it follows that $u_i/(a_\ell a_{\ell'})$ has a $(k-1)$-fold component of the form $c_xz$ where $z\in V(H_2)\cup D$. Then $w=\frac{u_i}{(a_\ell a_{\ell'})(c_xz)}\in I(G)^{[k-2]}$ and $u_t:=u_ib_\ell /z=(a_\ell b_\ell)(a_{\ell '}c_x)w$
 satisfies the required condition.
 \end{proof}

 \begin{definition} 
 \label{def}
  Let $1\leq p \leq c \leq q$ be integers. Let $K_{q-c}$ be the complete graph on $q-c$ vertices. Let $H$ be the graph on $c$ vertices whose connected components are a complete graph of order $c-(p-1)$ and $p-1$ isolated vertices. Let $G_{p,c,q}$ be the graph which is obtained by adding a whisker to each vertex of $K_{q-c}*H$.
\end{definition}
One has $\nu_1(G_{p,c,q}) = p$ and $\nu(G_{p,c,q}) = q$.  The graph $G_{p,c,q}$ is exactly what we are looking for and the main result of the present paper is as follows:

\begin{theorem}
\label{maintheorem}
    Let $2\leq p\leq c \leq q$ be integers. Then $I(G_{p,c,q})^{[k]}$ has linear quotients for $k\geq c$ and is not linearly related for $k < c$.
\end{theorem}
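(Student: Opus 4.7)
The plan is to derive the theorem as a direct corollary of the two main tools already developed in the paper: Theorem~\ref{nursel} for the linear quotients half, and the Corollary following Lemma~\ref{related} for the non-linearly-related half. Both halves essentially amount to parameter bookkeeping: matching the construction in Definition~\ref{def} to the hypotheses of these prior results.

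For the first half, suppose $k \geq c$. By Definition~\ref{def}, $G_{p,c,q}$ is built from the join $K_{q-c} * H$ by attaching a whisker to every vertex, where $H$ has exactly $c$ vertices. Since join is symmetric, I would rewrite this as $H * K_{q-c}$ and apply Theorem~\ref{nursel} with $H_1 = H$ (so $r = c$) and $H_2 = K_{q-c}$. The hypothesis $p \leq c \leq q$ ensures $1 \leq r \leq q$, so the theorem yields linear quotients of $I(G_{p,c,q})^{[k]}$ for each $c \leq k \leq q$, which is exactly the claimed range.

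For the second half, suppose $1 \leq k < c$. Let $G'$ denote the induced subgraph of $G_{p,c,q}$ obtained by deleting the $q-c$ vertices of $K_{q-c}$ together with their $q-c$ attached whiskers. Since the join $H * K_{q-c}$ only contributes edges with one endpoint in $V(H)$ and the other in $V(K_{q-c})$, this deletion removes exactly the join edges and the $K_{q-c}$ edges; what remains is the subgraph on $V(H)$ together with one whisker attached to each vertex of $H$. That is precisely the graph appearing in Lemma~\ref{related} with $d = p-1$, which is permitted because $2 \leq p \leq c$ guarantees $1 \leq d < c$. The Corollary to Lemma~\ref{related} then gives that $I(G')^{[k]}$ is not linearly related for all $1 \leq k < c$. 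Since $G'$ is an induced subgraph of $G_{p,c,q}$, the monotonicity of the non-linearly-related property under induced subgraphs (\cite[Corollary~1.3]{EHHS}) transfers the failure to $G_{p,c,q}$ itself.

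There is no substantive obstacle here: the heavy lifting is in Theorem~\ref{nursel} (the careful case analysis with the reverse lexicographic order) and in the Corollary to Lemma~\ref{related} (the double induction on $d$ and $c$). The only points to check in the write-up are that the two parameter dictionaries line up — namely $r = c$ when invoking Theorem~\ref{nursel}, and $d = p-1$ when invoking the Corollary — and that the induced subgraph $G'$ is correctly identified. Once those identifications are made, the theorem is immediate.
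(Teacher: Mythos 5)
Your proposal is correct and follows essentially the same route as the paper: the linear quotients half is Theorem~\ref{nursel} with $H_1=H$ (so $r=c$) and $H_2=K_{q-c}$, and the failure of linear relatedness comes from the whiskered graph on $V(H)$ via the Corollary to Lemma~\ref{related} with $d=p-1$, transferred to $G_{p,c,q}$ by the induced-subgraph result \cite[Corollary~1.3]{EHHS}. The parameter checks you flag ($1\leq d<c$ from $2\leq p\leq c$, and $1\leq r\leq q$ from $c\leq q$) are exactly the bookkeeping the paper's proof relies on.
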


\begin{proof}  
    Let $H$ be the graph as in Definition \ref{def} and $G$ be the graph obtained by adding a whisker to each vertex of $H$.  Then Corollary~\ref{related} says that $I(G)^{[k]}$ is not linearly related for $k < c$.  Since $G$ is an induced subgraph of $G_{p,c,q}$, it follows from \cite[Corollary~1.3]{EHHS} that $I(G_{p,c,q})^{[k]}$ is not linearly related for $k< c$.  On the other hand, Theorem \ref{nursel} gurantees that $I(G_{p,c,q})^{[k]}$ has linear quotients for $k\geq c$, as desired.
\end{proof}

\begin{remark}
When $p =1$, the graph $G_{p,c,q}$ is exactly the complete graph $K_{q}$ with a whisker at each vertex. This provides an example for any integer $q$ with $p=c=1\leq q$ because $c(G_{p,c,q}) = 1$.  Also, one can construct a graph of desired type for any integer $q$ with $p=1<c=2\leq q$ by multiplying some vertices of a $5$-cycle as described in \cite{E}. It is unclear if Theorem~\ref{maintheorem} is true for any integers $c$ and $q$ with $p =1<2<c\leq q$.
\end{remark}

%\textbf{Data availability statement:} The manuscript has no associated data.

\section*{Acknowledgment}
The research on this paper began while the authors were visiting Jürgen Herzog in Essen in August 2023.  We thank Jürgen Herzog together with Universität Duisburg -- Essen for the hospitality during our stay in Essen. The second author was supported by TÜBİTAK (2221 - Fellowships for Visiting Scientists and Scientists on Sabbatical Leave) to visit Nursel Erey at Gebze Technical University between 1-15 September, 2024. Macaulay 2 Package \cite{Fic} was helpful in the preparation of this paper.


\begin{thebibliography}{}

\bibitem[BHZ]{BHZ} M.~Bigdeli, J.~Herzog and R.~Zaare-Nahandi, \textit{On the index of powers of edge ideals}, Comm. Algebra, {\bf 46} (3) (2018), 1080--1095.

\bibitem[CF]{CF} M. Crupi and A. Ficarra, \textit{Edge ideals whose all matching powers are bi-Cohen-Macaulay}, 2025, preprint https://arxiv.org/abs/2503.08521
 
\bibitem[CFL]{CFL} M.~Crupi, A.~Ficarra and E.~Lax, \textit{Matchings, squarefree powers and Betti splittings}, 2023, https://arxiv.org/abs/2304.00255

\bibitem[DRS1]{DRS1} K.~K.~Das, A.~Roy and K.~Saha, \textit{Square-free powers of Cohen-Macaulay forests, cycles, and whiskered cycles}, arXiv:2409.06021 

\bibitem[DRS2]{DRS2} K. K. Das, A. Roy and K. Saha, \textit{Square-free powers of Cohen-Macaulay simplicial forests}, 2025, preprint https://arxiv.org/abs/2502.18396
 
\bibitem[E]{E} N.~Erey, \textit{Powers of edge ideals with linear resolutions}, Comm. Algebra, {\bf 46} (2018), 4007--4020. 

\bibitem[EF1]{EF1} N. Erey and A. Ficarra, \textit{Forests whose matching powers are linear}, 2024, preprint, https://arxiv.org/abs/2403.17797

\bibitem[EF2]{EF2} N. Erey and A. Ficarra, \textit{Matching powers of monomial ideals and edge ideals of weighted oriented graphs}, Journal of Algebra and Its Applications, 2026, https://doi.org/10.1142/S0219498826501185.

\bibitem[EH]{EH} N.~Erey and T.~Hibi, \textit{Squarefree powers of edge ideals of forests}, Electron. J. Combin., {\bf 28} (2021), \#2.32.
  
\bibitem[EHHS]{EHHS} N.~Erey, J.~Herzog, T.~Hibi and S.~Saeedi~Madani, \textit{Matchings and squarefree powers of edge ideals}, J. Comb. Theory Series. A, {\bf 188} (2022), \#105585.

\bibitem[EHHS2]{EHHS2} N.~Erey, J.~Herzog, T.~Hibi and S.~Saeedi~Madani, \textit{The normalized depth function of squarefree powers}, Collect. Math., {\bf 75} (2024), 409--423.

\bibitem[F1]{F1} S.~A.~Seyed Fakhari, \textit{An increasing normalized depth function}, J. Commut. Algebra {\bf 16} (2024), no. 4, 497--499.

\bibitem[F2]{F2} S.~A.~Seyed Fakhari, \textit{On the Castelnuovo–Mumford regularity of squarefree powers of edge ideals}, J. Pure Appl. Algebra, {\bf 228} (3):107488, 2024.
\bibitem[F3]{F3} S. A. Seyed Fakhari, \textit{On the Regularity of squarefree part of symbolic powers
of edge ideals}, Journal of Algebra, (2025), {\bf 665} 103--130.
  
\bibitem[Fic]{Fic} A.~Ficarra, \textit{Matching Powers: Macaulay 2 Package}, https://arxiv.org/abs/2312.13007.

\bibitem[FHH]{FHH} A.~Ficarra, J.~Herzog and T.~Hibi, \textit{Behaviour of the normalized depth function}, Electron. J. Comb., {\bf 30} (2), (2023) P2.31.

\bibitem[FM]{FM} A. Ficarra and S. Moradi, \textit{Monomial ideals whose all matching powers are Cohen-Macaulay}, 2024, preprint https://arxiv.org/abs/2410.01666
 
\bibitem[Frö]{Frö} R.~Fr\"{o}berg, \textit{On Stanley-Reisner rings}, Topics in algebra, Banach Center Publications, {\bf  26} (2) (1990),  57--70.
    

 \bibitem[HVT]{HVT} H.~T.~H\`{a} and A. Van Tuyl, \textit{Monomial ideals, edge ideals of hypergraphs, and their graded
 Betti numbers}, J. Algebraic Combin., {\bf 27} (2008), 215--245.

 \bibitem[HHZ]{HHZ} J.~Herzog, T.~Hibi and S.~Zheng, \textit{Monomial ideals whose powers have a linear resolution}, Math. Scnd., {\bf 95} (2004), 23--32.

 \bibitem[HHKT]{HHKT} T.~Hibi, A.~Higashitani, K.~Kimura and A.~Tsuchiya, \textit{Dominating induced matchings of finite graphs and regularity of edge ideals}, J. Algebraic Combin., {\bf 43} (2016), 173--198

 \bibitem[HS]{HS} T. Hibi and S. A. Seyed Fakhari, \textit{Bounded powers of edge ideals: regularity and
linear quotients}, 2025, preprint arxiv.org/abs/2502.01768

 \bibitem[KNQ]{KNQ} E. Kamberi, F. Navarra and A. A. Qureshi, \textit{On squarefree powers of simplicial trees}, 2024, preprint https://arxiv.org/abs/2406.13670

 \bibitem[K]{K} M.~Katzman, \textit{Characteristic-independence of Betti numbers of graph ideals}, J. Combin. Theory, Ser. A, {\bf 113} (2006), 435--454.

 \bibitem[W]{W} R.~Woodroofe, \textit{Matchings, coverings, and Castelnuovo-Mumford regularity}, J. Commut. Algebra, {\bf 6} (2014), 287--304.
		
\end{thebibliography}
\end{document}